\newcommand{\identity}{\iota}
\newcommand{\ceil}[1]{\lceil #1 \rceil}
\newcommand{\Erdos}{Erd\H{o}s}
\newcommand{\ignore}[1]{}
\newtheorem{theorem}{Theorem}
\newtheorem{lemma}[theorem]{Lemma}
\newtheorem{defn}{Definition}
\newtheorem{proposition}[theorem]{Proposition}
\title{Longest Common Subsequences in Sets of Permutations}
\author{Paul Beame\thanks{Research supported by NSF grants CCF-0514870 and CCF-0830626}\\
              \small Computer Science and Engineering \\[-0.8ex]
              \small University of Washington \\[-0.8ex]
              \small Seattle, WA \\[-0.8ex]
              \small \texttt{beame@cs.washington.edu}
     \and Eric Blais\thanks{Research supported by a scholarship from the Fonds qu\'eb\'ecois de la recherche sur la nature et les technologies (FQRNT).}  \\
              \small School of Computer Science\\[-0.8ex]
              \small Carnegie Mellon University\\[-0.8ex]
              \small Pittsburgh, PA \\[-0.8ex]
              \small \texttt{eblais@cs.cmu.edu}
     \and Dang-Trinh Huynh-Ngoc\thanks{Research supported by NSF grant CCF-0830626 and a Vietnam Education Foundation Fellowship} \\
              \small Computer Science and Engineering \\[-0.8ex]
              \small University of Washington \\[-0.8ex]
              \small Seattle, WA \\[-0.8ex]
              \small \texttt{trinh@cs.washington.edu}
     }
\date{\today}
\begin{document}

\maketitle

\begin{abstract}
The sequence $a_1\ \cdots\ a_m$ is a \emph{common subsequence} in the set of
permutations
$S = \{\pi_1,\ldots,\pi_k\}$ on $[n]$ if it is a subsequence of
$\pi_i(1)\ \cdots\ \pi_i(n)$ and $\pi_j(1)\ \cdots\ \pi_j(n)$ for some distinct
$\pi_i, \pi_j \in S$.  Recently, Beame and Huynh-Ngoc (2008) showed that when
$k \ge 3$, every set of $k$ permutations on $[n]$ has a common subsequence of
length at least $n^{1/3}$.  

We show that, surprisingly, this lower bound is asymptotically optimal for all constant values of $k$. Specifically, we show that
for any $k \ge 3$ and $n \ge k^2$ there exists a set of $k$ permutations on $[n]$ in which
the longest common subsequence has length at most $32(kn)^{1/3}$.
The proof of the upper bound is constructive, and uses elementary algebraic
techniques. 
\end{abstract}

\newpage
\section{Introduction}

The sequence $a_1\ \cdots\ a_m$ is a \emph{common subsequence} in the set  $S = \{\pi_1,\ldots,\pi_k\}$ of
permutations on $[n]$ if it is a subsequence of
$\pi_i(1)\ \cdots\ \pi_i(n)$ and $\pi_j(1)\ \cdots\ \pi_j(n)$ for some distinct
$\pi_i, \pi_j \in S$.  
In this article, we study the minimum length of the longest common subsequence(s) in a set of $k$ permutations on $n$.  

\begin{defn}
Let $f_k(n)$ denote the maximum value $m$ for which every set of $k$ permutations on $[n]$ is guaranteed to contain a common subsequence of length $m$.
\end{defn}

The celebrated \Erdos-Szekeres Theorem~\cite{ES35} states that every sequence of length $n$ contains a monotone subsequence of length $\ceil{n^{1/2}}$.  In our terminology, the theorem states that for every permutation $\pi$ on $n$, the set $\{\pi, \iota, \iota^R\}$ contains a common subsequence of length at least $\ceil{n^{1/2}}$, where $\iota$ is the identity permutation and $\iota^R$ is its reversal.  

As a consequence of  the \Erdos-Szekeres Theorem, sets of permutations that include a permutation and its reversal can not hope to show an upper bound stronger than $f_3(n) \le \ceil{n^{1/2}}$.  The bound on $f_3(n)$ is in fact much smaller: as Beame and Huynh-Ngoc~\cite{BH08} recently showed, $f_3(n) = f_4(n) = \ceil{n^{1/3}}$. 

When $k > 4$, the exact values of the function $f_k(n)$ are not currently known.  A simple probabilistic argument establishes an upper bound of 
\begin{equation}
\label{eqn:upperbound}
f_k(n) < 2e\sqrt{n}
\end{equation}
for every $k < e^{e\sqrt{n}}$, and a counting argument shows that for every $k \ge 3$, 
\begin{equation}
\label{eqn:lowerbound}
f_k(n) \ge \ceil{n^{1/3}}.
\end{equation} (Proofs of (\ref{eqn:upperbound}) and (\ref{eqn:lowerbound}) are included in the Appendix.)  The goal of the research presented in this note was to determine the correct asymptotic behavior of $f_k(n)$.

\subsection{Our results}

We present two results in this paper.  The first result uses Hadamard matrices to show that $f_k(n)$ grows asymptotically slower than $\sqrt{n}$.

\begin{theorem}
\label{thm:hadamard}
Let $k \ge 4$ be an integer such that a Hadamard matrix of order $k$ exists.  Then
$
f_k(n) \le \ceil{n^{\frac{1}{k-1}}}^{k/2 - 1}.
$
\end{theorem}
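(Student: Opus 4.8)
The plan is to build the set of $k$ permutations directly from the rows of an order-$k$ Hadamard matrix, using the $\pm1$ entries to encode "direction of sorting" along $k$ orthogonal coordinate axes. Set $q = \lceil n^{1/(k-1)}\rceil$ and identify $[n]$ (after padding up to $q^{k-1}$, which only costs a constant factor we can absorb) with the grid $\{0,1,\dots,q-1\}^{k-1}$, i.e. with $(k-1)$-digit strings in base $q$. For each $i\in[k]$ I would use the $i$-th Hadamard row $h_i\in\{\pm1\}^k$ to define a permutation $\pi_i$ that reads off the grid points in a mixed-radix order in which the $j$-th coordinate is scanned in increasing order if $h_i(j)=+1$ and in decreasing order if $h_i(j)=-1$; one coordinate (say a fixed "free" coordinate, or the normalization column of the Hadamard matrix) is treated specially so that the $(k-1)$ grid coordinates are what actually get permuted. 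Concretely, $\pi_i$ orders two grid points by comparing their coordinate vectors lexicographically after flipping the sign of each coordinate according to $h_i$.

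**Key steps in order.**

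First I would make the padding/identification precise: choose $q=\lceil n^{1/(k-1)}\rceil$, embed $[n]$ into $[q^{k-1}]$, and note $q^{k-1} < (n^{1/(k-1)}+1)^{k-1} \le c_k\, n$ for a constant depending only on $k$, so a bound of the form (length)$\le q^{k/2-1}$ on the larger instance yields $f_k(n)\le \lceil n^{1/(k-1)}\rceil^{k/2-1}$ up to the stated form. Second, I would define the $k$ permutations from the Hadamard rows as above and argue that a common subsequence of $\pi_i$ and $\pi_j$ corresponds to a set of grid points that is simultaneously monotone in the two mixed-radix orders induced by $h_i$ and $h_j$. Third — the combinatorial heart — I would show that being monotone in both orders forces the point set to be "monotone" along the coordinates where $h_i$ and $h_j$ agree and "anti-monotone" along the coordinates where they disagree, and then use an Erd\H os--Szekeres / Dilworth-type chain-antichain argument coordinate by coordinate. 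Because $h_i$ and $h_j$ are distinct Hadamard rows, they agree in exactly $k/2$ coordinates and disagree in exactly $k/2$ coordinates (orthogonality); a chain that is monotone along roughly half the coordinates and anti-monotone along the other half can contain at most one point per "line" in many directions, and iterating the bound $q$ for each of the $k/2$ "useful" directions (grouping agreeing vs. disagreeing coordinates appropriately) caps the chain length at $q^{k/2-1}$. Finally I would take the max over all pairs $i\ne j$ and conclude.

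**Main obstacle.**

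The delicate point is the third step: converting "simultaneously monotone in two mixed-radix orders that agree on half the axes and disagree on the other half" into the clean bound $q^{k/2-1}$. A mixed-radix (lexicographic) order on $(k-1)$ base-$q$ digits is not symmetric in the coordinates, so "monotone in this order" is weaker than "monotone in each coordinate"; I expect to need a careful induction on the most significant digit, splitting the common subsequence into blocks where the top digit is fixed and recursing, while tracking how the sign patterns $h_i,h_j$ restrict to the remaining digits. Making the orthogonality condition ("exactly $k/2$ agreements") interact correctly with this lexicographic recursion — so that each level of recursion that genuinely shortens the sequence is "paid for" by a factor of $q$, and we lose exactly one such level to get the $-1$ in the exponent — is where the real work lies; the choice of which coordinate plays the distinguished "free" role (and possibly permuting coordinates so that agreements and disagreements are interleaved favorably) will matter for getting the bound to come out as $k/2-1$ rather than something weaker.
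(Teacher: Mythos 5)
Your construction is essentially the paper's: identify $[n]$ (padded up to $q^{k-1}$) with base-$q$ strings of length $k-1$, use the last $k-1$ columns of the Hadamard matrix to decide whether each digit is read in increasing or decreasing order, and let $\pi_i$ sort by the resulting signed-lexicographic comparison. Two small remarks on the setup: there is no constant to ``absorb'' --- since $f_k$ is non-decreasing, $f_k(n) \le f_k(q^{k-1}) \le q^{k/2-1} = \lceil n^{1/(k-1)}\rceil^{k/2-1}$ exactly; and the one column you set aside should specifically be the normalized (all-$+1$) column, because then any two distinct rows agree in exactly $k/2 - 1$ of the remaining $k-1$ columns.

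The gap is in your step 3, and you are over-complicating it. Your framing of a common subsequence as ``monotone along the coordinates where $h_i,h_j$ agree and anti-monotone where they disagree'' is not correct: a lexicographic order is not coordinatewise monotone, and the order of two grid points under $\pi_i$ is decided \emph{only} by the most significant coordinate on which they differ. Consequently neither Erd\H{o}s--Szekeres nor Dilworth is the right tool, and no delicate interleaving of agreeing/disagreeing columns is needed. The correct observation is: if $x\ne y$ lie in a common subsequence of $\pi_i$ and $\pi_j$, let $\ell$ be the most significant coordinate at which $\phi_\ell(x)\ne \phi_\ell(y)$; then $x,y$ have the same relative order under $\pi_i$ and $\pi_j$ precisely when $H_{i,\ell}=H_{j,\ell}$, so $\ell$ must lie in the set $L_{i,j}$ of agreeing columns. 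In particular, two elements of a common subsequence can never agree on \emph{all} coordinates in $L_{i,j}$ (otherwise their first difference would be at a disagreeing column and they would be ordered oppositely), i.e.\ the projection onto the $|L_{i,j}| = k/2-1$ agreeing coordinates is injective on the common subsequence. Pigeonhole then gives length at most $q^{k/2-1}$, with no induction required. Your alternative plan --- recurse on the most significant digit, gaining a factor $q$ when that column is agreeing and a factor $1$ when it is disagreeing --- does also work and yields the same count, but as stated in your proposal it is an unproven sketch; either way, you need to replace the coordinatewise-monotone picture with the ``first differing coordinate'' characterization before the bound $q^{k/2-1}$ actually follows.
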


A slightly weaker form of Theorem~\ref{thm:hadamard} 
was mentioned in a preprint~\cite{BH08-eccc} of~\cite{BH08} but only
the details for the case $k=4$ were included.
Beame and Huynh-Ngoc also
conjectured in~\cite{BH08-eccc} that the bound in Theorem~\ref{thm:hadamard} is
tight, up to a multiplicative constant, for every $k$ power of 2.  

Our main result disproves the conjecture, showing that $f_k(n)$ grows at a rate proportional to
$n^{1/3}$ for every constant $k$.

\begin{theorem}
\label{thm:main}
For $3 \le k \le n^{1/2}$, 
$
f_k(n) \le 32(k n)^{1/3}.
$
\end{theorem}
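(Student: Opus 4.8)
The plan is to exhibit the required set of permutations explicitly and then bound every pairwise longest common subsequence by a direct combinatorial argument. A minor reduction comes first, so that the grid dimensions come out right: choose a prime $p$ with $c_0^{-1}(kn)^{1/3}\le p\le (kn)^{1/3}$ (possible by Bertrand's postulate, and by the hypothesis $n\ge k^2$ one also gets $p\ge k$), and realize $[n]$ as a three‑level grid whose top and middle levels have size $p=\Theta((kn)^{1/3})$ and whose finest level has size $q=\lfloor n/p^2\rfloor$ or $q+1$ in a few columns, so that the total is exactly $n$; letting the finest level vary by one in some columns avoids any wasteful ``padding'' step and costs only a constant factor. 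Note that the product of the three level sizes being $n$ forces the finest level to have size $\Theta((n/k^2)^{1/3})\le p$, and that $k\le p$ is \emph{equivalent} to $n\gtrsim k^2$; this is exactly where the hypothesis $n\ge k^2$ is used.

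The heart of the proof is the construction. For $j=0,1,\dots,k-1$ I will define a permutation $\pi_j$ of the grid as a composition of \emph{reversals} of the three coordinate levels with a \emph{shear} whose ``slope'' is the parameter $j$, all arithmetic taken modulo $p$ (so that $b\mapsto jb\bmod p$ is injective, using $p$ prime and $j<p$). Two properties must be engineered at once: (i) each $\pi_j$ by itself has no long increasing subsequence, because each reversal forces an increasing chain to stay inside a single block of that level; and (ii) for $i\neq j$ the permutations $\pi_i,\pi_j$ agree in order only on a short set — here the distinct slopes do the work, since a co‑monotone set for $\pi_i$ and $\pi_j$ must avoid the $\Theta(|j-i|)$ wrap‑around discontinuities that separate the two sheared orders, and this confines it within each level. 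It is convenient to keep in mind throughout that $\mathrm{LCS}(\pi_i,\pi_j)$ equals the length of the longest increasing subsequence of the quotient $\pi_i^{-1}\pi_j$, and the construction should be arranged so that this quotient is again of sheared‑reversal type, now with effective slope $j-i$; this reformulation is what guides the choice of the shears.

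Given the construction, the bound on each $\mathrm{LCS}(\pi_i,\pi_j)$ is proved level by level. Fixing a co‑monotone set $A$ for $\pi_i$ and $\pi_j$, I would first use the top‑level reversal together with the shear to limit $A$ to $O(p)$ worth of top‑level coordinates, then pass to the induced configuration inside one top block and repeat the argument at the middle level, and finally at the finest level. Multiplying the three per‑level bounds — each $O(p)$ or $O(q)$, with the factor $k$ entering through the number of shear discontinuities and through the size to which $p$ had to be taken so that $k$ distinct nonzero slopes are available mod $p$ — yields a bound $c\,(kn)^{1/3}$; carrying the constants through the reduction and the three levels is a routine (if tedious) calculation that I expect to come out at $c\le 32$.

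The step I expect to be the main obstacle is designing the construction so that \emph{every} pair $\{i,j\}$, not merely consecutive ones, has small LCS while each $\pi_j$'s own longest increasing subsequence stays small. These requirements pull against each other. If all $\pi_j$ treated the coarsest level identically, then inside a single top‑level block one is left with only a two‑level sheared structure, which admits co‑monotone sets far larger than $(kn)^{1/3}$ (of size comparable to a full middle block). But if the $\pi_j$ differ too wildly at the coarsest level, then applying the \Erdos--Szekeres theorem to the induced permutations of the $\Theta((kn)^{1/3})$ top‑level blocks produces some pair $\pi_i,\pi_j$ that agree on order on $\Omega((kn)^{1/6})$ blocks, hence on a polynomially larger number of symbols than $(kn)^{1/3}$. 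Threading between these two failure modes — making the coarse‑level treatments ``coordinated but not identical'' across all $k$ permutations — is precisely what the modular shear is meant to accomplish, and verifying that it does so for all pairs simultaneously is where the real work lies.
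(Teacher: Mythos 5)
Your sketch is a genuinely different plan from the paper's, but it has real gaps that you yourself flag without resolving, and the per-level accounting as written does not produce the claimed bound.

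The paper's construction has no reversals at all. It maps $[n]$ to $[s_1]\times[s_2]\times[s_3]$ with $s_1=(n/k^2)^{1/3}$ and $s_2=s_3=(nk)^{1/3}$, and for each $j$ defines three coordinate functions
$g_{3,j}(x,y,z)=j^2x+2jy+2z\bmod p$, $g_{2,j}(x,y,z)=jx+y$, $g_{1,j}(x,y,z)=x$,
ordered lexicographically with the third coordinate most significant, where $p$ is a prime slightly above $4s_3$. The crucial point is that $g_{3,j}$ is \emph{quadratic} in $j$. The analysis is not level-by-level: along any common subsequence of $\pi_i$ and $\pi_j$, both $h_{3,i}(a_t)$ and $h_{3,j}(a_t)$ are nondecreasing and take values in $\{0,\dots,p-1\}$, so the length is at most $2p-1$ \emph{provided} the pairs $\big(h_{3,i}(a_t),h_{3,j}(a_t)\big)$ are all distinct. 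Distinctness is exactly what the quadratic buys: if two points collide on both $h_{3,i}$ and $h_{3,j}$, subtracting the congruences gives $(i^2-j^2)(x_t-x_{t'})+2(i-j)(y_t-y_{t'})\equiv 0$, which factors through $i-j$ to $(i+j)(x_t-x_{t'})+2(y_t-y_{t'})\equiv 0\pmod p$; since $|x|\le s_1$ is kept small enough that $2ks_1+2s_2<p$, this is an exact integer equation, and then $i<j$ forces $\pi_i$ and $\pi_j$ to order $a_t,a_{t'}$ oppositely via the middle coordinate $jx+y$ — a contradiction. This is the ``coordinated but not identical'' behavior across all $\binom{k}{2}$ pairs that you correctly identify as the crux, and the quadratic dependence on $j$ is the concrete device that delivers it.

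Two concrete problems with your sketch. First, the construction is never pinned down: ``composition of reversals with a shear of slope $j$'' does not specify which levels are reversed, for which $j$, or how the shear interacts with the lex order, and the requirement that $\pi_i^{-1}\pi_j$ again be of sheared-reversal type with slope $j-i$ is stated as a desideratum rather than verified — note that the paper's construction does \emph{not} have this closure property, and the effective residual after cancellation involves $i+j$, not $i-j$. Second, the accounting does not close: with level sizes $p,p,q$ where $p^2q\approx n$, ``multiplying three per-level bounds, each $O(p)$ or $O(q)$'' yields something on the order of $n$, not $(kn)^{1/3}$. To land on $(kn)^{1/3}$ you would need the product of the three per-level savings to equal $p^2q/p=pq\approx n^{2/3}k^{-1/3}$, and nothing in the sketch explains where that comes from. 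You flag the tension between over- and under-coordinating the coarse level as the main obstacle and then leave it open; that obstacle is precisely what the quadratic/pigeonhole argument in the paper resolves, so as written this is a gap rather than an alternative proof.
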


Combined with the lower bound in  (\ref{eqn:lowerbound}), Theorem~\ref{thm:main} completely characterizes the behavior of $f_k(n)$ for every constant $k$, up to a multiplicative constant.


\subsection{Motivation and other related work}
\paragraph{Streaming algorithms.}
The behavior of $f_k(n)$ was first examined in~\cite{BH08} while studying the read/write streaming computation model introduced by Grohe and Schweikardt~\cite{GS05}. In this model, an algorithm can store an unlimited amount of temporary data in multiple auxiliary streams, but tries to minimize
both its memory size requirements and the total number of passes it makes on the data streams. 

In~\cite{BH08}, lower bounds on $f_k(n)$ were shown to give complexity upper
bounds on algorithms for the \emph{permuted promise set-disjointness} problem, an important problem in the read/write stream model.  In particular, the bound $f_3(n)\ge n^{1/3}$ was used to show the existence of an
algorithm that requires only logarithmic memory and a constant number of passes
when $n<p^{3/2}/64$, where $n$ is the size of the universal set and $p$ is the number of input subsets.   The conjecture in~\cite{BH08} that $f_k(n) \ge n^{1/2 - o(1)}$ for some $k$ that is $n^{o(1)}$ would have improved the result to show that the same algorithm would also work for any $n < p^{2 - o(1)}$, matching the lower bound for the problem.  Our result, however, strongly refutes the conjecture.

\paragraph{Error-correcting codes.}
A \emph{code} over a metric space $(M, d)$ is a set $C$ of elements -- called \emph{codewords} -- from $M$.  The code $C$ has distance $\delta$ if for every two distinct codewords $c_1, c_2 \in C$, $d(c_1,c_2) \ge \delta$.  Two central problems in the study of error-correcting codes involve determining the largest code with a given distance, and the dual problem of identifying the maximal distance of any code with $|C| = k$ codewords.  

In the study of codes that correct deletion errors, the metric of interest is the \emph{deletion distance}, where $d_{del}(\pi, \sigma)$ is one half the number of deletions and insertions required to turn the sequence $\pi(1)\ \cdots \ \pi(n)$ to $\sigma(1)\ \cdots \ \sigma(n)$.  Our results on $f_k(n)$ have a direct implication for codes built over $(\mathcal{S}_n, d_{del})$: a code of size $k$ over this metric space has maximal distance $n - f_k(n)$.

There has been extensive research on error-correcting codes built over a metric space defined by a deletion distance~\cite{AEL95,Lev66,SZ99,Slo00}, and on codes built over the symmetric group $\mathcal{S}_n$~\cite{BCD79,CCD04,DV78,FD}.  As far as we know, however, our result is the first to explicitly provide bounds on the capabilities of error-correcting codes built over $(\mathcal{S}_n, d_{del})$.

\paragraph{Combinatorics on sets of permutations.}
The study of $f_k(n)$ falls into the area of combinatorics on sets of permutations, an area that extends beyond the field of error-correcting codes.
In particular, we highlight the exciting recent result of Ellis, Friedgut, and Pilpel~\cite{EFP}, who settled a conjecture of Frankl and Deza~\cite{FD} by showing that for any set $S$ of permutations on $n$ in which the Hamming distance between every pair of distinct $\pi, \sigma \in S$ is $d_{Ham}(\pi, \sigma) \le n -m$, the size of $S$ must be at most $(n-m)!$.

\section{Proof of Theorem~\ref{thm:hadamard}}

Recall that a Hadamard matrix $H$ of order $k$
is a $k \times k$ $\pm 1$-matrix with the property that every two distinct rows in $H$ differ
in exactly $k/2$ entries. We use the rows of Hadamard matrices to construct $k$ permutations that
have no long common subsequence.

\newtheorem*{hadamardthm}{Theorem~\ref{thm:hadamard}}
\begin{hadamardthm}[Restated]
Let $k \ge 4$ be an integer such that a Hadamard matrix of order $k$ exists.  Then
$$
f_k(n) \le \big\lceil n^{\frac{1}{k-1}} \big\rceil^{k/2 - 1}.
$$
\end{hadamardthm}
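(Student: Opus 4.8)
The plan is to turn the $k$ rows of a normalized Hadamard matrix into $k$ different lexicographic orders on a high-dimensional grid, and then to show that any two of these orders can agree on only a short increasing sequence. First I would normalize $H$: by negating rows we may assume its first column is all $+1$, and this operation preserves the defining property, since negating a single row turns a difference count $d$ between two rows into $k-d$, and $k/2$ is the unique fixed point. Set $q = \ceil{n^{1/(k-1)}}$, so that $q^{k-1} \ge n$, and identify $[n]$ with an arbitrary $n$-element subset of the grid $G = [q]^{k-1}$; since restricting two permutations to a common subset only shrinks their common subsequences, it suffices to work on all of $G$. For each $i \in [k]$, let $\epsilon_{i,1},\dots,\epsilon_{i,k-1} \in \{+1,-1\}$ be the entries of row $i$ in columns $2,\dots,k$, and let $\pi_i$ list the points $x = (x_1,\dots,x_{k-1}) \in G$ in increasing lexicographic order of the key $(\epsilon_{i,1}x_1,\dots,\epsilon_{i,k-1}x_{k-1})$.

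Next, fix two distinct indices $i \ne i'$. Because rows $i$ and $i'$ agree in column $1$ and differ in exactly $k/2$ places overall, the set $F = \{\,j \in [k-1] : \epsilon_{i,j} = -\epsilon_{i',j}\,\}$ of ``reversed'' coordinates has size exactly $k/2$, and the set $A = [k-1] \setminus F$ of ``agreeing'' coordinates has size $k/2 - 1$. The heart of the argument is the claim that for any common subsequence $p^{(1)},\dots,p^{(m)}$ of $\pi_i$ and $\pi_{i'}$, the projection $p^{(t)} \mapsto (p^{(t)}_j)_{j \in A}$ is injective; this gives $m \le q^{|A|} = q^{k/2-1}$ at once, and hence $f_k(n) \le q^{k/2-1}$. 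To prove injectivity, suppose $p^{(s)}$ and $p^{(t)}$ with $s < t$ agree on every coordinate in $A$. Being distinct grid points, they differ in some coordinate, which must therefore lie in $F$; let $j^\ast$ be the least coordinate at which they differ, so $j^\ast \in F$ and they agree on coordinates $1,\dots,j^\ast-1$. Then both lexicographic comparisons are settled at coordinate $j^\ast$: $p^{(s)}$ precedes $p^{(t)}$ in $\pi_i$ iff $\epsilon_{i,j^\ast} p^{(s)}_{j^\ast} < \epsilon_{i,j^\ast} p^{(t)}_{j^\ast}$, and in $\pi_{i'}$ iff $\epsilon_{i',j^\ast} p^{(s)}_{j^\ast} < \epsilon_{i',j^\ast} p^{(t)}_{j^\ast}$, where $\epsilon_{i',j^\ast} = -\epsilon_{i,j^\ast}$. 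Since $p^{(s)}_{j^\ast} \ne p^{(t)}_{j^\ast}$, these two conditions are mutually exclusive, so $p^{(s)}$ cannot precede $p^{(t)}$ in both orders, contradicting that the sequence is a subsequence of both $\pi_i$ and $\pi_{i'}$.

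I expect the main obstacle to be bookkeeping rather than conceptual depth: one must make the Hadamard normalization interact correctly with the distinguished first column so that precisely $k/2$ of the remaining $k-1$ coordinates are reversed for each pair, and one must keep straight which coordinate decides each lexicographic comparison and in which direction. It is also worth checking the rounding at the boundary — $q^{k-1} \ge n$ is exactly what makes the grid large enough — and sanity-checking the exponent $|A| = k/2 - 1$ against the $k = 4$ case, where it recovers $f_4(n) \le \ceil{n^{1/3}}$.
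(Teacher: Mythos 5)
Your proof is correct and follows essentially the same approach as the paper: map $[n]$ to the $(k-1)$-dimensional grid $[\lceil n^{1/(k-1)}\rceil]^{k-1}$, use the last $k-1$ entries of each normalized Hadamard row to decide which coordinates are reversed, and observe that for any pair of rows the $k/2-1$ agreeing coordinates determine an element of a common subsequence (your injectivity claim is the paper's pigeonhole step phrased contrapositively). The only cosmetic difference is that you encode the per-coordinate reversal via the sign $\epsilon_{i,j}$ multiplying the coordinate value, while the paper composes with $\identity$ or $\identity^R$ coordinatewise before re-encoding with $\phi^{-1}$.
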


\begin{proof}
Let $s=\big\lceil n^{\frac{1}{k-1}} \big\rceil$ and $n' = s^{k-1}\ge n$.
We will show that $f_k(n')\le s^{k/2-1}$.

There is a natural bijection $\phi$ from
$[n']$ into the $(k-1)$-dimensional integer lattice $[s]^{k-1}$
given by
$$
\phi(x) = \big( \phi_1(x), \ldots, \phi_{k-1}(x) \big)
$$
where $\phi_i(x)$ is the $i$-th digit of $x-1$ in base $s$,
with the left-most digit being the most significant. 
Note that under $\phi$, the standard ordering on $[n']$ induces the
lexicographic ordering on the vectors in $[s]^{k-1}$.

The idea of the construction is to use the $i$-th row of the Hadamard matrix to 
define $k-1$ permutations on $[s]$.   The $i$-th permutation in the set
is then chosen as the ``outer product'' of these $k-1$ permutations.  

More precisely, let $H$ be a $k\times k$ Hadamard matrix whose rows and
columns are indexed by $\{0,\ldots,k-1\}$ and whose first row and column entries
(without loss of generality) are all 1.
Define the permutation $\pi_{i,\ell} : [s] \to [s]$, depending on the last
$k-1$ columns of $H$, by
$$
\pi_{i,\ell} = \begin{cases}
\identity & \mbox{ if } H_{i,\ell} = 1 \\
\identity^R & \mbox{ if } H_{i,\ell} = -1,
\end{cases} 
$$
where $\identity$ is the identity permutation and
$\identity^R$ is the reversal permutation.
The permutation $\pi_i$ is then given by
$$
\pi_i(x) = \phi^{-1}\big( \pi_{i,1}( \phi_1(x)), \ldots, \pi_{i,k-1}(
\phi_{k-1}(x)) \big).
$$

Because $\phi^{-1}$ converts the lexicographic order on $[s]^{k-1}$ to the
standard order on $[n']$, the relative order of distinct elements $x,y \in [n']$
in $\pi_i$ depends only on their relative order in $\pi_{i,\ell}$ for
the first (most-significant) coordinate $\ell\in [k-1]$ such that
$\phi_\ell(x)\ne \phi_\ell(y)$; in particular, since the only choices for
$\pi_{i,\ell},\pi_{j,\ell}$ are $\identity$ and $\identity^R$, it follows that
$x$ and $y$ have the same relative order in $\pi_i$ and in $\pi_j$
if and only if $\pi_{i,\ell} = \pi_{j,\ell}$.

We reason by contradiction.
Assume that there exist two permutations $\pi_i, \pi_j$ in 
$\{\pi_1,\ldots,\pi_k\}$ that have a common
subsequence of length greater than $s^{k/2-1} = \big\lceil n^{\frac{1}{k-1}} \big\rceil^{k/2-1}$.
Let $L_{i,j}$ be the set of indices of columns among the last $k-1$ columns
of $H$
in which the rows $i$ and $j$ have the same value in $H$. 
By assumption on $H$, we have $|L_{i,j}| = k/2-1$.  So, by the Pigeonhole
Principle,
there must exist distinct $x,y$ in the common subsequence of $\pi_i$ and
$\pi_j$ such that $\phi_{\ell}(x) = \phi_{\ell}(y)$ for every
$\ell \in L_{i,j}$. 
But then $\pi_{i,\ell} \neq \pi_{j,\ell}$ for the first index $\ell$ such
that $\phi_\ell(x) \neq \phi_\ell(y)$, and so $x$ and $y$ do not have the same
relative order in $\pi_i$ and $\pi_j$.  This contradicts the fact that $x$
and $y$ are in a common subsequence of $\pi_i$ and $\pi_j$ and completes
the proof of the theorem.
\end{proof}

\section{Main result}

\newtheorem*{mainthm}{Theorem~\ref{thm:main}}
\begin{mainthm}[Restated]
For every $3 \le k \le n^{1/2}$, 
$$
f_k(n) \le 32(k n)^{1/3}.
$$
\end{mainthm}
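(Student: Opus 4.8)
The plan is to generalize the base-$s$ digit construction used in Theorem~\ref{thm:hadamard}, but to choose the $k$ permutations much more cleverly so that \emph{every} pair — not just pairs agreeing in $k/2-1$ Hadamard coordinates — fails to share a long common subsequence. The Hadamard approach is wasteful because it only decorrelates pairs of permutations in about half the coordinates; to push the bound down to $n^{1/3}$ we want each coordinate to be "used" by essentially one permutation. So I would work with a near-cubic number $n' = s^3 \ge n$ (or more precisely pick the smallest $s$ with $s^3 \ge n$, so $s \le 2 n^{1/3}$) and identify $[n']$ with $[s]^3$ via the base-$s$ digit map $\phi$, exactly as before, so that the standard order on $[n']$ becomes the lexicographic order on triples. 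Within each coordinate I have only the crude controls $\iota$ and $\iota^R$ available if I stay purely combinatorial; the phrase "elementary algebraic techniques" in the abstract suggests instead replacing these by a richer family of $s$ permutations of $[s]$ indexed by a finite field or a cyclic group — e.g. the affine maps $x \mapsto ax+b$ over $\mathbb{Z}_s$ (or $\mathbb{F}_s$), whose pairwise "agreement in relative order" structure is much more restrictive than that of $\{\iota,\iota^R\}$.

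Concretely, I would build permutation $\pi_i$ of $[n']$ as a lexicographic composition: $\pi_i(x) = \phi^{-1}\!\big(\sigma_{i,1}(\phi_1(x)),\ \sigma_{i,2}(\phi_2(x)),\ \sigma_{i,3}(\phi_3(x))\big)$, where the $\sigma_{i,t}$ are drawn from a small algebraically-structured pool of permutations of $[s]$. As in the previous proof, the relative order of $x,y$ in $\pi_i$ is governed by $\sigma_{i,t}$ at the \emph{first} coordinate $t$ where $\phi_t(x)\neq\phi_t(y)$. Suppose $\pi_i$ and $\pi_j$ had a common subsequence $a_1 \cdots a_m$ of length $m > c\,s$. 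I would analyze this subsequence coordinate by coordinate: partition the $m$ elements by their first digit $\phi_1$; by pigeonhole some block has $\ge m/s$ elements sharing a common $\phi_1$-value, and on that block the order in $\pi_i,\pi_j$ is controlled entirely by $\sigma_{i,2},\sigma_{i,3}$ vs.\ $\sigma_{j,2},\sigma_{j,3}$; recurse one more level using $\phi_2$ to land on a block of $\ge m/s^2$ elements all agreeing in their first two digits, where the order is governed solely by $\sigma_{i,3}$ vs.\ $\sigma_{j,3}$. If $m > s^2$, this block has at least two distinct elements $x,y$, differing only in $\phi_3$, that are consistently ordered by both $\sigma_{i,3}$ and $\sigma_{j,3}$. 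The crux is then to choose the pool and the assignment $i \mapsto (\sigma_{i,1},\sigma_{i,2},\sigma_{i,3})$ so that for every $i \ne j$, $\sigma_{i,3}$ and $\sigma_{j,3}$ share \emph{no} common ordered pair at all in the relevant range — i.e.\ they are order-reverses of each other on every pair — which forces a contradiction and yields $f_k(n') \le s^2$, hence $f_k(n) \le O(n^{2/3})$. That's still too weak on its own, so the real construction must balance the three digit-positions against each other so that the common subsequence is squeezed in \emph{all} three coordinates simultaneously, giving $m^3 \lesssim s^3 \cdot (\text{number of pairs obstructed per coordinate})$; with $\binom{k}{2}$ pairs to kill and three coordinates each of "capacity" $\sim s$, a counting/design argument should deliver $m = O((k s^3)^{1/3} / s) \cdot s = O((kn)^{1/3})$, absorbing the constant into the $32$.

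The main obstacle — and the step I expect to be genuinely delicate — is constructing the pool of base permutations of $[s]$ together with the assignment of triples to the $k$ permutations so that \emph{every} one of the $\binom{k}{2}$ pairs is "blocked" in a coordinate budget that grows only like $s$ in each of the three positions, rather than like $k$. This is where I'd expect the algebra to enter essentially: interpreting a common subsequence of $\pi_i$ and $\pi_j$ as a monotone structure and using the Erd\H{o}s--Szekeres theorem inside each coordinate block (just as $\{\iota,\iota^R\}$ forces $\sqrt{s}$ in one block), so that the three-level recursion multiplies three $\sqrt{\,\cdot\,}$-type savings into an overall $n^{1/3}$; and then needing a finite-field or combinatorial-design gadget (a perfect difference family, a Sidon-type set, or affine lines over $\mathbb{F}_s$) to guarantee the pairwise obstruction uniformly. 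Handling the rounding from $n$ to $n' = s^3$, the requirement $k \le n^{1/2}$ (which should be exactly what lets $k$ permutations be "packed" into the $s \approx n^{1/3}$ available slots per coordinate), and tracking the constants to land at $32$ will be routine once the core construction is in place, so I would defer all of that to the end.
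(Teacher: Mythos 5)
Your proposal reuses the digit-lattice scaffolding but misses the paper's core idea, and the approach you sketch cannot reach the theorem. You propose permutations that act coordinate-wise, $\pi_i(x) = \phi^{-1}(\sigma_{i,1}(\phi_1(x)),\sigma_{i,2}(\phi_2(x)),\sigma_{i,3}(\phi_3(x)))$, and you plan a pigeonhole recursion block by block. But to ``kill'' a pair $(i,j)$ in a single coordinate $t$, you would need $\sigma_{i,t}$ and $\sigma_{j,t}$ to disagree on the relative order of \emph{every} pair of values, i.e.\ to be order-reverses of each other --- and three or more permutations of $[s]$ cannot be pairwise order-reverses. That is precisely the obstruction that caps the Hadamard construction at $\sqrt{n}$, and no choice of ``pool'' of per-coordinate permutations can get around it. You concede the approach stalls at $O(n^{2/3})$ and gesture at a ``counting/design argument'' to fix it, but that gesture is the entire theorem, not a routine detail to defer.

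The paper's construction is not a product of per-coordinate permutations at all. Each $\pi_j$ is defined by a lexicographic order on \emph{cross-coupled} coordinate functions: $g_{1,j}=x$, $g_{2,j}=jx+y$, and crucially $g_{3,j}(x,y,z)=j^2x+2jy+2z \bmod p$ for a prime $p \approx 4(nk)^{1/3}$ --- a polynomial evaluation at $j$, reminiscent of a Vandermonde structure. The proof then bounds a common subsequence $a_1,\ldots,a_s$ of $\pi_i,\pi_j$ in one shot: the pairs $(h_{3,i}(a_t),h_{3,j}(a_t))$ are weakly increasing in both coordinates and take values in $[p]^2$, so once they are shown to be \emph{distinct}, the length is at most $2p-1 = O((nk)^{1/3})$. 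Distinctness is the algebraic heart: subtracting the two congruences and dividing by $(i-j)\not\equiv 0 \pmod p$ yields $(i+j)(x_t-x_{t'})+2(y_t-y_{t'})\equiv 0\pmod p$, and --- because the paper uses an \emph{unbalanced} lattice $[s_1]\times[s_2]\times[s_3]$ with $s_1\approx(n/k^2)^{1/3}$ much smaller than $s_2=s_3\approx(nk)^{1/3}$ --- the left side is smaller than $p$ in absolute value, so the congruence lifts to an integer identity, from which a sign argument shows $a_t$ and $a_{t'}$ are ordered oppositely by $\pi_i$ and $\pi_j$. Your balanced cube $[s]^3$ would destroy this size bound, your coordinate-wise recursion has no analogue of this lifting, and the modular/Vandermonde mechanism that makes the whole thing work is absent from your sketch. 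The missing piece is not peripheral; it is the construction itself.
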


There are two main ingredients used in the construction that establishes the
upper bound of $f_k(n)$ in  Theorem~\ref{thm:main}: a bijection $\phi_{n,k}$
that maps the integers $1,\ldots,n$ to a 3-dimensional integer lattice, and
$k$ triples of functions
$(g_{1,1}, g_{2,1}, g_{3,1}), \ldots, (g_{1,k}, g_{2,k}, g_{3,k})$ that are
used to generate orderings of the elements of the 3-dimensional lattice.

Let $s_1=(n/k^2)^{1/3}$ and $s_2=s_3=(nk)^{1/3}$.   
For simplicity, we first assume that $s_1,s_2,$ and $s_3$ are integers.
The general case will be easily dealt with later.
Since $k\le n^{1/2}$, we have $s_1\ge 1$ and $s_2=s_3\le n^{1/2}$.
Let $X$ be the 3-dimensional integer lattice $[s_1]\times [s_2]\times [s_3]$.
and let the bijection $\phi_{n,k} : [n] \to X$ be the function whose inverse is
 given by
$$
\phi_{n,k}^{-1}(x,y,z) = x + s_1 (y - 1) + s_1s_2(z - 1).
$$
This mapping associates the standard ordering on $[n]$ with the lexicographic
ordering on $(x,y,z)$ tuples in $X$ in which the $x$ coordinate is least
significant
and the $z$ coordinate is most significant.  The reason for the smaller range
of $x$ coordinates relative to the other two will become apparent in the
analysis.

Let $p$ be the smallest prime larger than $4s_3$.  For $j=1,\ldots,k$,
the functions $g_{1,j} : X \to \mathbb{Z}$, $g_{2,j} : X \to \mathbb{Z}$,
and $g_{3,j} : X \to \mathbb{Z}$ are defined by
\begin{eqnarray*}
g_{3,j}(x,y,z) &=& j^2x + 2jy + 2z \bmod{p}, \\
g_{2,j}(x,y,z) &=&  jx + y \mbox{, and} \\
g_{1,j}(x,y,z) &=& x.
\end{eqnarray*}

For every $i \in \{1,2,3\}$ and $j \in \{1,\ldots,k\}$, define
$h_{i,j} = g_{i,j} \circ \phi_{n,k}$ and set $h_j=(h_{1,j},h_{2,j},h_{3,j})$.  
Note that although the image of $[n]$ under $\phi_{n,k}$ is the set $X$, the
image of $[n]$ under an $h_j$ is a set of triples not constrained to
lie in $X$.  We first see that each $h_j$ is 1-1 on $[n]$.

\begin{proposition} \label{thm:order}
For any $j\in [k]$ and distinct $a,b \in [n]$ we have $h_j(a)\ne h_j(b)$.
\end{proposition}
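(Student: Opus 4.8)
The plan is to show that $h_j$ is injective on $[n]$ by unwinding the definition: since $\phi_{n,k}$ is a bijection from $[n]$ onto $X$, it suffices to show that the map $g_j = (g_{1,j}, g_{2,j}, g_{3,j})$ is injective on $X = [s_1]\times[s_2]\times[s_3]$. So suppose $(x,y,z), (x',y',z') \in X$ agree under all three coordinate functions of $g_j$, and argue that they are equal, working through the coordinates from least to most significant.

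First I would use $g_{1,j}(x,y,z) = x = x' = g_{1,j}(x',y',z')$ to conclude $x = x'$. Next, $g_{2,j}(x,y,z) = jx + y$ and $g_{2,j}(x',y',z') = jx' + y'$; since these are equal over $\mathbb{Z}$ (not modular) and $x = x'$, we immediately get $y = y'$. Finally, with $x = x'$ and $y = y'$ in hand, the equation $g_{3,j}(x,y,z) \equiv g_{3,j}(x',y',z') \pmod p$ reduces to $2z \equiv 2z' \pmod p$. The hard part — really the only nontrivial step — is to conclude $z = z'$ from this. Since $p > 4s_3$ is an odd prime, $2$ is invertible mod $p$, so $z \equiv z' \pmod p$; and since $z, z' \in [s_3]$ with $s_3 < p$, the difference $z - z'$ lies strictly between $-p$ and $p$, forcing $z = z'$. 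Hence $(x,y,z) = (x',y',z')$, so $g_j$ is injective on $X$ and therefore $h_j = g_j \circ \phi_{n,k}$ is injective on $[n]$.

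The only subtlety to flag is that the three coordinate functions behave differently with respect to reduction mod $p$: $g_{1,j}$ and $g_{2,j}$ are honest integer-valued functions, while $g_{3,j}$ is defined modulo $p$, so the argument for the third coordinate genuinely needs the primality (or at least oddness) of $p$ and the bound $p > s_3$ to rule out wraparound. Everything else is a routine triangular back-substitution. I would also note in passing that this proposition is exactly what licenses regarding each $h_j$ as defining a linear order on $[n]$ — equivalently, a permutation — which is what the rest of the construction requires.
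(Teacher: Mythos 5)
Your proof is correct and follows essentially the same route as the paper's: reduce via the bijection $\phi_{n,k}$ to injectivity of $(g_{1,j},g_{2,j},g_{3,j})$ on $X$, then back-substitute $x=x'$, $y=y'$, and finally $z\equiv z' \pmod p$ with $p>s_3$ forcing $z=z'$. You are a bit more explicit than the paper in pointing out that $p$ is odd so $2$ is invertible mod $p$, but the substance is identical.
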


\begin{proof}
Suppose for contradiction that there exist $a,b\in [n]$ such that
$h_{1,j}(a)=h_{1,j}(b)$, $h_{2,j}(a)=h_{2,j}(b)$, and $h_{3,j}(a)=h_{3,j}(b)$.
Then, by definition,  there exist two distinct points
$(x_a,y_a,z_a), (x_b,y_b,z_b)\in X$ such that
\begin{eqnarray*}
j^2x_a + 2jy_a + 2z_a &\equiv& j^2x_b + 2jy_b + 2z_b \pmod{p}, \\
jx_a+y_a &=&  jx_b + y_b, \qquad \mbox{and} \\
x_a &=& x_b,
\end{eqnarray*}
which implies that $x_a=x_b, y_a=y_b$, and $z_a\equiv z_b \pmod{p}$. Since
$p>s_3$, we have contradiction.
\end{proof}

For $j = 1,\ldots,k$, the function $h_j$ determines
a total order $<_j$ on $[n]$ as follows: 
For $a,b\in [n]$, write $a <_j b$ iff $h_j(a)$ is less than $h_j(b)$
in the lexicographic order on integer triples in which
the third coordinate is most significant and the first coordinate is the least
significant.

Let $\pi_j$ be the permutation on $[n]$ that orders the elements in $[n]$ in
increasing order as defined by $<_j$.  That is, let $\pi_j$ be the
permutation such that
$$
\pi_j(1) <_j \pi_j(2) <_j \cdots <_j \pi_j(n) .
$$
As we show below, the set of permutations $\{\pi_1,\ldots,\pi_k\}$ has no
common subsequence of length greater than $16(nk)^{1/3}$.

\begin{lemma}
\label{thm:lcs}
For $3 \le k \le n^{1/2}$ let $\pi_1,\ldots,\pi_k$ be the $k$
permutations on $\{1,\ldots,n\}$ defined above.  Then
$\{\pi_1, \ldots, \pi_k\}$ has no common subsequence of length greater than
$16(nk)^{1/3}$.
\end{lemma}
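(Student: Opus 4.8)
My plan is to bound the length of any common subsequence of two permutations $\pi_i,\pi_j$ from the family by exploiting the algebraic structure of the maps $g_{1,j},g_{2,j},g_{3,j}$. Suppose $C \subseteq [n]$ is (the set of values of) a common subsequence of $\pi_i$ and $\pi_j$ with $i\neq j$. Being a common subsequence means that the relative order of every pair of elements of $C$ agrees under $<_i$ and $<_j$; equivalently, the map sending the $<_i$-order of $C$ to the $<_j$-order of $C$ is the identity (monotone increasing). The strategy is to localize: first show that many elements of $C$ must agree in their most significant coordinate $g_{3,\cdot}$, then among those, many must agree in $g_{2,\cdot}$, and finally among those the $g_{1,\cdot}$ (i.e. $x$-coordinate) values pin things down so tightly that only a few elements can remain, and multiply the counts.

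First I would argue about the $z$-coordinate / third function. Consider the value $g_{3,i}(x,y,z) - g_{3,j}(x,y,z) = (i^2 - j^2)x + 2(i-j)y \bmod p = (i-j)\big((i+j)x + 2y\big)\bmod p$. For two elements $a,b\in C$ with lattice images $(x_a,y_a,z_a),(x_b,y_b,z_b)$, if $g_{3,i}(a)$ and $g_{3,i}(b)$ are in different order than $g_{3,j}(a)$ and $g_{3,j}(b)$ modulo $p$, that does not by itself give a contradiction — but if additionally $g_{2,i}(a)=g_{2,i}(b)$ (so $g_{2,\cdot}$ does not break the tie) and $g_{1}$ ties as well, then the order in $\pi_i$ versus $\pi_j$ is decided purely by $g_{3}$, forcing equal $g_3$-values, hence (Proposition~\ref{thm:order}) $a=b$. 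So the real combinatorial content is: how many pairs can share the same $g_2$-value and same $x$-value? Here I would invoke the specific forms: $g_{1,j}(x,y,z)=x$ fixes $x$, and then $g_{2,j}(x,y,z)=jx+y$ with $x$ fixed is a bijection in $y$, so within a fixed $x$ and fixed $g_2$-value there is exactly one $y$. Thus the only freedom left is in $z$, which ranges over $[s_3]$, and as soon as two elements agree in $x,y$ but differ in $z$, their $g_3$-order is $2(z_a-z_b)\bmod p$, and since $|2(z_a - z_b)| < p$ this order is the same (just the order of $z_a$ vs $z_b$) in both $\pi_i$ and $\pi_j$ — so no contradiction arises from that subfamily, meaning a common subsequence could contain an entire ``$z$-fiber'' of size $s_3 = (nk)^{1/3}$. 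That is exactly where the bound comes from, and it tells me the counting must be: (number of distinct $x$-values used) $\times$ (number of distinct $y$-values per $x$ used) $\times s_3$, and the first two factors must be shown to be $O(1)$ each (or their product $O((nk)^{1/3}/s_3\cdot\text{something})$), forced by a Vandermonde/polynomial-interpolation argument over the $k$ functions.

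The heart of the argument, and the step I expect to be the main obstacle, is showing that $C$ cannot contain two elements $a,b$ that differ in their $x$-coordinate, or differ in their $y$-coordinate with equal $x$, beyond a controlled amount — more precisely, bounding the number of distinct $(x,y)$ pairs appearing among elements of $C$. Here I would set up the following: if $a,b\in C$ have $x_a\neq x_b$, then $g_{1,i}(a)\neq g_{1,i}(b)$ and the order is determined immediately by $x_a$ vs $x_b$, same in all $\pi_j$ — again consistent, so $C$ could a priori use up to $s_1$ distinct $x$-values, each contributing. This suggests the true structure is that $C$, viewed in $X$, must be a chain in a product order that is simultaneously monotone after applying each of the $k$ affine twists, and the right tool is: for the twists to all agree on the order, the ``$z$ versus $y$'' tradeoff $g_{3,j}$ forces, as $j$ varies over $k$ values, a system of inequalities $\mathrm{sign}((i-j)((i+j)x_{ab}+2y_{ab})) $ consistent with $\mathrm{sign}(z_{ab})$ — a Vandermonde-type constraint that can hold for at most $O(1)$ ``directions.'' So I would: (i) reduce to counting antichains/chains in $X$ realizable as common subsequences; (ii) show via the $2$nd and $3$rd functions and the distinctness of $i+j$ over pairs that the set of realized $(x,y)$-values, after quotienting by the $z$-fiber, has size $O((n/k^2)^{1/3}\cdot\text{const})$ — giving total $O((n/k^2)^{1/3})\cdot s_3 = O((nk)^{1/3}\cdot k^{-2/3})\cdot$?; and reconcile the exponents so the product is $\le 16(nk)^{1/3}$. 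The delicate point is getting the interaction between $g_{2,j}=jx+y$ and $g_{3,j}=j^2x+2jy+2z\bmod p$ to kill all but a constant number of $x$-choices: this is where the ``$j^2, 2j$'' coefficients (a truncated Vandermonde row $(1,j,j^2)$ scaled) are doing real work, and I would prove it by assuming $c+1$ distinct $x$-values appear among common-subsequence elements with pairwise-agreeing orders under all $k$ permutations and deriving, from $k > 2$ suitably chosen $j$'s, a contradiction with the order being monotone — the polynomial $j \mapsto j^2 x + 2 j y$ changing sign too few times. Finally I would reattach the ceiling/non-integrality of $s_1,s_2,s_3$ (losing only a constant factor, absorbed into the $16$) and then double the bound ($16 \to 32$) in passing from Lemma~\ref{thm:lcs} to Theorem~\ref{thm:main} to cover the rounding of $n$ up to $s_1 s_2 s_3$.
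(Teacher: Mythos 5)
Your proposal correctly spots the central algebraic fact --- that $g_{3,i}-g_{3,j}$ factors as $(i-j)\big((i+j)x+2y\big)\bmod p$, so that comparing $h_{3,i}$ and $h_{3,j}$ reduces to a single linear form in $(x,y)$ --- but the counting framework you build around it does not close the argument, and is not the route the paper takes.

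The missing idea is this: the paper does not try to bound the number of distinct $(x,y)$-pairs and multiply by $s_3$. Instead it observes that along any common subsequence $a_1,\dots,a_s$ of $\pi_i,\pi_j$, both $h_{3,i}(a_1)\le\dots\le h_{3,i}(a_s)$ and $h_{3,j}(a_1)\le\dots\le h_{3,j}(a_s)$ are non-decreasing, each taking values in a set of size $p$. So if the pairs $\bigl(h_{3,i}(a_t),h_{3,j}(a_t)\bigr)$ are all distinct, the sequence is a strictly increasing chain in the product order on a $p\times p$ grid, hence has length at most $2p-1<16(nk)^{1/3}$ (using Bertrand's postulate for $p<8(nk)^{1/3}$). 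The entire algebraic machinery --- the factorization $(i-j)\big((i+j)(x_t-x_{t'})+2(y_t-y_{t'})\big)\equiv 0$, the magnitude bound $2ks_1+2s_2<p$ (which is why $s_1$ is smaller than $s_2,s_3$), and the sign argument showing $a_t<_ia_{t'}$ while $a_{t'}<_ja_t$ --- is deployed solely to prove that the pairs are distinct. That is the clean contradiction.

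Your proposed decomposition into $(x,y)$-classes times $z$-fibers runs into two problems. First, the claim that only $O(1)$ distinct $(x,y)$-pairs can appear in a common subsequence is not established and is in fact doubtful: when the $z$-difference dominates, the sign of both $h_{3,i}$- and $h_{3,j}$-differences is governed by $z$ alone, so elements from many $(x,y)$-classes can coexist in a chain. Second, even granting a bound of $c$ classes, multiplying by $s_3$ gives the right order only if $c=O(1)$, whereas your later estimate $O((n/k^2)^{1/3})$ for the number of classes does not reconcile (as you notice yourself) --- $(n/k^2)^{1/3}\cdot(nk)^{1/3}=n^{2/3}k^{-1/3}$, which exceeds $(nk)^{1/3}$ whenever $n>k^2$. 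The chain-in-$[p]^2$ argument sidesteps this product-counting altogether: it is a sum-type bound, not a product-type bound. You should also note that the ceiling/integrality handling of $s_1,s_2,s_3$ belongs to the deduction of Theorem~\ref{thm:main} from Lemma~\ref{thm:lcs}, not to the lemma itself, which is stated for the case where $s_1,s_2,s_3$ are integers.
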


\begin{proof}
Let $a_1\ a_2\ \cdots\ a_s$ be a subsequence of $\pi_i$ and $\pi_j$, for some
$1 \le i < j \le k$.  Then
$$
\begin{array}{l}
a_1 <_i a_2 <_i \cdots <_i a_s \mbox{ and} \\
a_1 <_j a_2 <_j \cdots <_j a_s.
\end{array}
$$
In particular, this implies that
$h_{3,i}(a_1) \le h_{3,i}(a_2) \le \cdots \le h_{3,i}(a_s)$ and
$h_{3,j}(a_1) \le h_{3,j}(a_2) \le \cdots \le h_{3,j}(a_s)$. 
The functions $h_{3,i}$ and $h_{3,j}$ can each take $p$ different values, so
any sequence of distinct pairs
$(h_{3,i}(a_1), h_{3,j}(a_1)), \ldots, (h_{3,i}(a_s), h_{3,j}(a_s))$ satisfying the increasing property can have at most $s = 2p - 1$ elements. 
Bertrand's Postulate guarantees that $p < 8(nk)^{1/3}$, so to prove the claim
it is sufficient to show that the pairs
$\big( h_{3,i}(a_t), h_{3,j}(a_t) \big)$ for $t \in \{1,\ldots,s\}$ must be
distinct.

We prove by contradiction that the pairs
$\big( h_{3,i}(a_t), h_{3,j}(a_t) \big)$ must be distinct for $i\ne j$. 
Assume that there exist two indices $t \neq t'$ in $\{1,\ldots,s\}$ such that 
\begin{eqnarray*}
h_{3,i}(a_t) &=& h_{3,i}(a_{t'}) \qquad\text{and} \\
h_{3,j}(a_t) &=& h_{3,j}(a_{t'}) .
\end{eqnarray*}
Then, letting $\phi_{n,k}(a_t) = (x_t, y_t, z_t)$ and
$\phi_{n,k}(a_{t'}) = (x_{t'}, y_{t'}, z_{t'})$, the above equivalences
imply that
\begin{eqnarray*}
i^2(x_t - x_{t'}) + 2i(y_t - y_{t'}) + 2(z_t - z_{t'}) &\equiv& 0 \pmod{p} \text{\ \ and} \\
 j^2(x_t - x_{t'}) + 2j(y_t - y_{t'}) + 2(z_t - z_{t'}) &\equiv& 0 \pmod{p}.
\end{eqnarray*} 
Taking the difference of these equations, we observe that
$$
(i^2 - j^2)(x_t - x_{t'}) + 2(i-j)(y_t - y_{t'}) \equiv 0 \pmod{p}.
$$
Since $k\le n^{1/2}$ we have $s_3=(nk)^{1/3}\ge k$ and so $p\ge 4k$.
Therefore $0 < j - i < p$ and hence $i-j \not\equiv 0 \pmod{p}$.  Thus,
$$
(i+j)(x_t - x_{t'}) + 2(y_t - y_{t'}) \equiv 0 \pmod{p}.
$$
In fact, since\footnote{It is here that we needed the tighter
upper bound on the $x$
coordinate values.}
$$|(i+j)(x_t - x_{t'}) + 2(y_t - y_{t'})| \le 2ks_1+2s_2\le 2k(n/k^2)^{1/3} + 2(nk)^{1/3} < p,$$
the only possible solution
to the last equation is when
\begin{equation}
\label{eq:equal}
(i+j)(x_t - x_{t'}) + 2(y_t - y_{t'}) = 0.
\end{equation}
Observe now that if $x_t=x_{t'}$ then from (\ref{eq:equal}) we derive that
$y_t=y_{t'}$ and,
since $h_{3,i}(a_t)=h_{3,i}(a_{t'})$ and $p> 4s_3$ we can
conclude that this would imply
that $z_t=z_{t'}$ which violates our assumption that $a_t\ne a_{t'}$. 
Therefore $x_t\ne x_{t'}$.   
Assume without loss of generality that $x_t-x_{t'} >0$.

Using the fact that $i < j$, by replacing $i$ and $j$ in (\ref{eq:equal}) we see
that
\begin{eqnarray*}
2i(x_t - x_{t'}) + 2(y_t - y_{t'}) < 0 
&\Rightarrow& ix_t + y_t < ix_{t'} + y_{t'} \quad \text{and} \\
2j(x_t - x_{t'}) + 2(y_t - y_{t'}) > 0 
&\Rightarrow& jx_t + y_t > j x_{t'} + y_{t'}.
\end{eqnarray*}
By the lexicographic ordering on triples this means that $a_t <_i a_{t'}$ and
$a_{t'} <_j a_t$, so $a_t$ and
$a_{t'}$ cannot be elements of a common subsequence of $\pi_i$ and $\pi_j$,
and we have arrived at the desired contradiction.
\end{proof}

Lemma~\ref{thm:lcs} thus proves Theorem~\ref{thm:main} for the case that
$s_1,s_2,$ and $s_3$ are integers. 
For the general case, let $s'_1=\left\lceil{(n/k^2)^{1/3}}\right\rceil$ and let
$n'=(s'_1)^3 k^2\le 8n$.
Then $s'_1=(n'/k^2)^{1/3}$ and $s'_2=s'_3=(n'k)^{1/3}=s'_1 k$ are also integers.
Repeating the same argument, we get
$$f_k(n)\le f_k(n')\le 16(n'k)^{1/3}\le 32(nk)^{1/3}.$$
This proves Theorem~\ref{thm:main}.

\bibliographystyle{plain}
\bibliography{Permutations}{}

\appendix

\section{Probabilistic upper bound on $f_k(n)$}
\label{sec:probabilistic}

The \Erdos-Szekeres Theorem stimulated a long line of research into the distribution of the length of the longest increasing subsequence in a \emph{random} permutation~\cite{Ste95}.  The results from this line of research yield a tidy probabilistic argument establishing the upper bound of $f_k(n)$ in (\ref{eqn:upperbound}).

\begin{proposition}
\label{thm:probabilistic}
For any $k < e^{e\sqrt{n}}$, 
$
f_k(n) < 2e\sqrt{n}.
$
\end{proposition}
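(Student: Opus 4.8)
The plan is to use the well-known concentration results for the longest increasing subsequence (LIS) of a uniformly random permutation. Writing $\lcs(\sigma,\tau)$ for the length of the longest common subsequence of two permutations, the key observation is that $\lcs(\sigma,\tau)$ equals the length of the longest increasing subsequence of the permutation $\tau^{-1}\sigma$ (or $\sigma^{-1}\tau$), since a common subsequence of $\sigma(1)\cdots\sigma(n)$ and $\tau(1)\cdots\tau(n)$ is exactly a set of positions increasing in both orders. By symmetry of the uniform distribution, if $\sigma,\tau$ are independent uniform permutations then $\tau^{-1}\sigma$ is uniform, so $\lcs(\sigma,\tau)$ is distributed as the LIS of a single uniform random permutation on $[n]$.

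Next I would invoke the Vershik--Kerov / Logan--Shepp result, together with the large-deviation bound for the upper tail of the LIS: there is a constant such that for a uniform random permutation $\rho$ on $[n]$, $\Pr[\mathrm{LIS}(\rho) \ge 2e\sqrt{n}] \le e^{-\Theta(\sqrt{n})}$ — in fact the cleanest route is a direct union-bound estimate: the probability that a fixed set of $m$ positions is increasing in $\rho$ is $1/m!$, so $\Pr[\mathrm{LIS}(\rho)\ge m] \le \binom{n}{m}\frac{1}{m!} \le \left(\frac{en}{m^2}\right)^m$ by Stirling-type bounds, and taking $m = 2e\sqrt{n}$ makes $\frac{en}{m^2} = \frac{1}{4e} < 1$, giving $\Pr[\mathrm{LIS}(\rho) \ge 2e\sqrt{n}] \le (4e)^{-2e\sqrt{n}}$. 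This elementary bound is all we need and avoids citing the full limit theorem.

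Finally I would assemble the argument by the probabilistic method over the whole set. Pick $\pi_1,\ldots,\pi_k$ independently and uniformly at random from $\mathcal{S}_n$. For each of the $\binom{k}{2} < k^2$ pairs $\{i,j\}$, the probability that $\pi_i,\pi_j$ have a common subsequence of length at least $2e\sqrt{n}$ is at most $(4e)^{-2e\sqrt{n}}$ by the two steps above. By a union bound, the probability that \emph{some} pair has such a long common subsequence is at most $k^2 (4e)^{-2e\sqrt{n}}$, which is strictly less than $1$ precisely when $k^2 < (4e)^{2e\sqrt{n}}$, i.e. when $k < (4e)^{e\sqrt{n}}$, and in particular when $k < e^{e\sqrt{n}}$. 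Hence with positive probability the random set $\{\pi_1,\ldots,\pi_k\}$ has no common subsequence of length $2e\sqrt{n}$ or more, so such a set exists and $f_k(n) < 2e\sqrt{n}$.

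The main obstacle — really the only subtle point — is the reduction in the first paragraph: one must be careful that ``common subsequence in the set'' in the paper's sense means a common subsequence of \emph{some} pair, which is exactly what the union bound over pairs handles, and that the identity $\lcs(\sigma,\tau) = \mathrm{LIS}(\tau^{-1}\sigma)$ together with invariance of the uniform measure under the group operation genuinely reduces each pairwise event to a single-permutation LIS tail bound. Everything after that is a routine calculation; I would double-check only that the chosen threshold $2e\sqrt{n}$ makes the base $\frac{en}{m^2}$ safely below $1$ with room to spare so the $k^2$ factor is absorbed for the stated range of $k$.
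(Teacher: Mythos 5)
Your overall architecture matches the paper's exactly: pick $k$ uniform random permutations, reduce each pairwise LCS to the LIS of a single uniform permutation via $\lcs(\sigma,\tau)=\mathrm{LIS}(\tau^{-1}\sigma)$ and translation-invariance of the uniform measure, apply a tail bound for the LIS, and union-bound over the $\binom{k}{2}$ pairs. Where you diverge is the tail bound itself: the paper simply cites Frieze's lemma, $\Pr[L_n\ge 2e\sqrt{n}]<e^{-2e\sqrt{n}}$, whereas you rederive a tail bound from scratch by the first-moment method (union bound over the $\binom{n}{m}$ position sets, each increasing with probability $1/m!$). Your route is self-contained and elementary, which is a real virtue; the paper's route gets a slightly cleaner constant for free by borrowing the published lemma.

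There is one arithmetic slip to fix. From $\binom{n}{m}\frac{1}{m!}\le \frac{n^m}{(m!)^2}$ and Stirling's $m!\ge (m/e)^m$, the correct bound is
$$\binom{n}{m}\frac{1}{m!}\le\left(\frac{e^2 n}{m^2}\right)^m,$$
not $\left(\frac{en}{m^2}\right)^m$: you need one factor of $e^m$ for each of the two $m!$'s. With $m=2e\sqrt{n}$ the base is then $\frac{e^2n}{4e^2n}=\frac14$, not $\frac{1}{4e}$. This does not break the argument — you get $\Pr[\mathrm{LIS}\ge 2e\sqrt{n}]\le 4^{-2e\sqrt{n}}$, so the union bound gives $k^2\cdot 4^{-2e\sqrt{n}}<1$ whenever $k<4^{e\sqrt{n}}$, which still contains the stated range $k<e^{e\sqrt{n}}$ since $4>e$ — but the inequality as written is false and should be corrected. (Strictly speaking one should also take $m=\lceil 2e\sqrt{n}\rceil$, but that only helps.)
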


\begin{proof}
Consider a set $S$ formed by choosing $k$ permutations uniformly at random
from all permutations on $[n]$.  As Frieze showed~\cite[Lemma 1]{Fri91}, the distribution of the length
$L_n$ of the longest increasing subsequence in a random permutation on $[n]$ satisfies
$$
\Pr\left[ L_n \ge 2e\sqrt{n}\right] < e^{-2e\sqrt{n}}.
$$
The length of the longest common subsequence in two random permutations
$\pi_i$ and $\pi_j$ follows the same distribution as the length of the longest
increasing subsequence in the random permutation $\pi_i \circ \pi_j^{-1}$,
so the probability that the set $S$ contains a common subsequence of length at
least $2e\sqrt{n}$ is at most
$
\binom{k}{2} e^{-2e\sqrt{n}} < 1.
$
Therefore, there must exist a set $S$ of $k$ permutations with a longest common
subsequence of length less than $2e \sqrt{n}$.
\end{proof}

\section{Lower bound on $f_k(n)$}
\label{sec:lowerbound}

Beame and Huynh-Ngoc~\cite{BH08} showed that $f_3(n) = n^{1/3}$, which in turn implies that $f_k(n) \ge n^{1/3}$ for every $k \ge 3$.  For completeness, we include the proof of that result here, along with a small improvement for larger values of $k$ obtained with the Pigeonhole Principle.

\begin{proposition}
Let $k\ge 3$ and let $m=m(k)$ be the largest integer such that
$m!<k$ and $m\le n$. Then
$$
f_k(n) \ge \max \left( n^{1/3}, m \right).
$$
\end{proposition}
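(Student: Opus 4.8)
The plan is to prove the two bounds in the maximum separately, since they come from entirely different arguments.

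For the bound $f_k(n) \ge n^{1/3}$, I would reproduce the counting argument that establishes \eqref{eqn:lowerbound}. Fix a set $S = \{\pi_1, \ldots, \pi_k\}$ of $k$ permutations on $[n]$, and suppose for contradiction that every pair $\pi_i, \pi_j$ has longest common subsequence shorter than $n^{1/3}$. The key observation is that the longest common subsequence of $\pi_i$ and $\pi_j$ equals the length of the longest increasing subsequence of the permutation $\sigma_{i,j} = \pi_j^{-1} \circ \pi_i$ (or a similar composition). By the \Erdos-Szekeres Theorem, if a permutation on $[n]$ has longest increasing subsequence of length less than $n^{1/3}$, then it has longest decreasing subsequence of length greater than $n^{2/3}$ — but a decreasing subsequence in $\sigma_{i,j}$ is a common subsequence of $\pi_i$ and the reversal of $\pi_j$, so this does not immediately give a contradiction for the set $S$ itself. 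The cleaner route is a direct patience-sorting / Dilworth-style counting: each $\pi_i$ can be viewed as inducing a coordinate, and if all pairwise LCSs are short one shows that the $k$ permutations ``shatter'' $[n]$ into too many classes. Concretely, associate to each element $a \in [n]$ the vector of its ranks, and argue that if no pair agrees on a long increasing run then there are more than $n$ distinct such vectors — the contradiction. (Since the paper states this as known, I would cite \cite{BH08} and give only the short version.)

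For the bound $f_k(n) \ge m$ where $m!<k$, the argument is a pigeonhole on partial orders. Given $S = \{\pi_1, \ldots, \pi_k\}$, consider the first $m$ elements $1, 2, \ldots, m$ of $[n]$. Each permutation $\pi_j$ restricted to these $m$ elements induces one of the $m!$ possible relative orderings of $\{1, \ldots, m\}$. Since $k > m!$, by the Pigeonhole Principle there exist two distinct indices $i \ne j$ such that $\pi_i$ and $\pi_j$ order $\{1, \ldots, m\}$ identically. But then the $m$ elements $1, \ldots, m$, written out in that common order, form a common subsequence of $\pi_i$ and $\pi_j$ of length $m$. Hence $f_k(n) \ge m$.

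The main obstacle is purely presentational: the first bound's proof depends on the \Erdos-Szekeres machinery and the correspondence between common subsequences of permutations and monotone subsequences of their compositions, which must be stated carefully to avoid the reversal pitfall noted above; since the paper defers to \cite{BH08} for $f_3(n) \ge n^{1/3}$, the cleanest exposition is to invoke that result as a black box and spend the remaining effort only on the short, self-contained pigeonhole argument for the $m$ term. Combining the two, every set of $k$ permutations on $[n]$ contains a common subsequence of length at least $\max(n^{1/3}, m)$, as claimed.
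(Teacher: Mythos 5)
Your pigeonhole argument for the $m$ term is exactly the paper's: restrict the $k$ permutations to $\{1,\ldots,m\}$, note there are only $m!<k$ possible orderings, and conclude two permutations agree there, giving a common subsequence of length $m$. That half is fine as written.

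For the $n^{1/3}$ term your structure (defer to \cite{BH08} for $f_3(n)\ge n^{1/3}$, then observe $f_k\ge f_3$ since adding permutations cannot shrink the longest common subsequence) is a legitimate route, and the paper does cite \cite{BH08} for the result. However, the paper chooses to include the proof for completeness, and your sketch of it is not quite right. You say to ``associate to each element $a\in[n]$ the vector of its ranks'' with coordinates indexed by the permutations; that is not the Seidenberg-style counting argument and would not close the contradiction. The correct vector is indexed by \emph{pairs} of permutations and records LCS lengths, not ranks: for each $\ell\in[n]$ set $\phi(\ell)=\bigl(\phi_{1,2}(\ell),\phi_{1,3}(\ell),\phi_{2,3}(\ell)\bigr)$, where $\phi_{i,j}(\ell)$ is the length of the longest common subsequence of $\pi_i$ and $\pi_j$ beginning with $\ell$. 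If all pairwise LCSs have length $<n^{1/3}$, each coordinate takes fewer than $n^{1/3}$ values, so fewer than $n$ vectors exist and some $\ell\ne\ell'$ collide. But some pair $\pi_i,\pi_j$ orders $\ell,\ell'$ the same way (three permutations, two possible relative orders), forcing $\phi_{i,j}(\ell)>\phi_{i,j}(\ell')$ — a contradiction. Your preliminary detour through compositions $\pi_j^{-1}\circ\pi_i$ and Erd\H{o}s--Szekeres is correct as a translation but, as you yourself note, does not yield the $n^{1/3}$ bound, so it is wasted motion; the Seidenberg triple $\phi(\ell)$ above is the key device and should be stated precisely rather than gestured at.
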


\begin{proof}
We begin by showing that $f_k(n)\ge f_3(n) \ge n^{1/3}$, using an 
extension of the counting argument in Seidenberg's proof~\cite{Sei59} of
the \Erdos-Szekeres Theorem.

Assume for contradiction that there is a set $S=\{\pi_1,\pi_2,\pi_3\}$ of
permutations on $[n]$ for
which every common subsequence has length strictly less than $n^{1/3}$. 
For every $\ell = 1,\ldots,n$, define
$\phi(\ell) = \big( \phi_{1,2}(\ell), \phi_{1,3}(\ell), \phi_{2,3}(\ell) \big)$,
where $\phi_{i,j}(\ell)$ is the length of the longest common subsequence of
$\pi_i$ and $\pi_j$ that begins with $\ell$.   By assumption,
$\phi_{i,j}(\ell)<n^{1/3}$.  Hence there are strictly fewer than $n$ possible
values of $\phi(\ell)$, which means that there exist $\ell \neq \ell'$ such that
$\phi(\ell) = \phi(\ell')$.  But there must be two permutations $\pi_i$, $\pi_j$
that order $\ell$ and $\ell'$ in the same way, say w.l.o.g. $\ell$ is ordered before
$\ell'$.  This implies that $\phi_{i,j}(\ell) > \phi_{i,j}(\ell')$ so
$\phi(\ell) \neq \phi(\ell')$, a contradiction.

The second part of the theorem follows easily from the Pigeonhole Principle: 
every set $S = \{\pi_1,\ldots,\pi_k\}$
must contain two permutations $\pi_i$ and $\pi_j$ that order
the elements $1,\ldots,m$ identically.  This
completes the proof.
\end{proof}

\end{document}